\newcommand{\comment}[1]{}
\newcommand{\C}{\mathbf{C}}
\newcommand{\Z}{\mathbf{Z}}
\theoremstyle{plain}
\newtheorem{theorem}{Theorem}[section]
\newtheorem{lemma}[theorem]{Lemma}
\theoremstyle{definition}
\newtheorem{definition}[theorem]{Definition}
\newtheorem{remark}[theorem]{Remark}
\begin{document}

\begin{center}
{\large\bf Michiel Kosters - Mathematisch Instituut, Universiteit Leiden \par}
{\large\bf \texttt{mkosters@math.leidenuniv.nl}, \today \par}
 \vspace{3em} {\LARGE\bf The subset sum problem for finite abelian groups \par} 
{\large\bf }
\end{center}

\section{Abstract}

Let $G$ be a finite abelian group. For $g \in G$ and $i$ an integer we define \\
$N(i,g)=\#\{S \subseteq G: \#S=i, \sum_{s \in S}s=g\}$, the number of subsets of $G$ of size $i$ which sum up to $g$. We will give a short proof, using character theory, of a formula for these $N(i,g)$ due to Li and Wan. We also give a formula for $N(i,g)^*= \#\{S \subseteq G \setminus\{0\}: \#S=i, \sum_{s \in S}s=g\}$, which
generalizes another result of Wan.  \\

Keywords: subset sum, finite abelian group.

Classification MSC2010: 11B75, 20K99.

\section{Main theorem}

In this article we fix a finite abelian group $G$ of size $n$ and we let $0 \leq i \leq n$.

\begin{definition}
 For $g \in G$ we define $N(i,g)=\#\{S \subseteq G: \#S=i, \sum_{s \in S}s=g\}$.
\end{definition}

\begin{definition}
 For $g \in G$ we define $e(g)=\mathrm{max}\{d: d|\mathrm{exp}(G), g \in dG\}$. 
\end{definition}

Remark that $e(g)=\mathrm{lcm}\{d: d|\mathrm{exp}(G), g \in
dG\}$.
Indeed, if $g=d_1 g_1=d_2 g_2$ with $g_1, g_2 \in G$ and $d_1,d_2 \in \Z$ with $\gcd(d_1,d_2)=1$, then there are integers such that $1=n_1d_1+n_2d_2$.
Hence we have
\[g=(n_1d_1+n_2d_2)g=d_1d_2(n_1g_1+n_2g_2).\]

Let $\mu$ be the M\"obius function and for an integer $d$ let $G[d]=\{h \in G: dh=0\}$, the $d$-torsion of $G$.

Our main theorem is the following one.

\begin{theorem} \label{la}
 We have the following formula for $g \in G$:
\begin{eqnarray*}
 N(i,g)=\frac{1}{n} \sum_{s| \gcd(\mathrm{exp}(G),i)} (-1)^{i+\frac{i}{s}} {{n/s}\choose{i/s}} \sum_{d|\gcd(e(g),s)} \mu(\frac{s}{d}) \#G[d].
\end{eqnarray*}
\end{theorem}

\begin{remark}
The above theorem is a slight improvement of Theorem 1.1 in \cite{WA}. In \cite{WA} a rather long proof is given using some sieving techniques. 
\end{remark}

Let $\hat{G}$ be the set of characters on $G$ with identity $\chi_0=1$. Note that $\hat{G}$ is a group isomorphic to $G$. We
make the following observation:
\begin{eqnarray*}
 \sum_{i=0}^n \sum_{g \in G} N(i,g)g X^i= \prod_{\sigma \in G} (1+\sigma X) \in \C[G][X].
\end{eqnarray*}
We use the following easy fact. If $\alpha=\sum_{g \in G} \alpha_g g \in \C[G]$, then $\alpha_g=\frac{1}{n} \sum_{\chi
\in
\hat{G}}
\overline{\chi}(g)\chi(\alpha)$ (here we extend $\chi$ to a $\C$-linear map $\chi: \C[G] \to \C$). 
This gives the following formula, where we put $Y=-X$:
\begin{eqnarray*} \label{r}
\sum_{i=0}^n N(i,g)(-Y)^i= \frac{1}{n} \sum_{\chi \in \hat{G}} \overline{\chi}(g) \prod_{\sigma \in G}\left( 1-\chi(\sigma)Y \right).  
\end{eqnarray*}

\begin{lemma}
 Suppose that $\chi \in \hat{G}$ has order $m$. Then $\prod_{\sigma \in G}\left( 1-\chi(\sigma)Y \right)=(1-Y^m)^{n/m}$. 
\end{lemma}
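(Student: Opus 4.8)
The plan is to reduce the product over $G$ to a product over the image $\chi(G) \subseteq \C^\times$ by grouping factors according to the value $\chi(\sigma)$. Since $\chi$ has order $m$, its image is the unique cyclic subgroup of $\C^\times$ of order $m$, namely the group $\{\zeta \in \C : \zeta^m = 1\}$ of $m$-th roots of unity. As $\chi$ is a surjective homomorphism from $G$ onto this group, all of its fibers have the same cardinality $\#\ker\chi = n/m$. Hence each $m$-th root of unity is attained exactly $n/m$ times as $\sigma$ ranges over $G$, so that
\[
\prod_{\sigma \in G}\bigl(1-\chi(\sigma)Y\bigr) = \prod_{\zeta^m = 1}\bigl(1-\zeta Y\bigr)^{n/m} = \Bigl(\prod_{\zeta^m = 1}\bigl(1-\zeta Y\bigr)\Bigr)^{n/m}.
\]

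It then remains to identify the inner product. I would recall the factorization $\prod_{\zeta^m=1}(T-\zeta) = T^m - 1$ in $\C[T]$ and deduce $\prod_{\zeta^m=1}(1-\zeta Y) = 1 - Y^m$: both sides are polynomials of degree $m$ in $Y$ with constant term $1$, and the left-hand product vanishes precisely when $Y^{-1}$ is an $m$-th root of unity, i.e.\ at exactly the $m$ distinct roots of $1 - Y^m$. Equivalently, one may substitute $T = 1/Y$ into the factorization and clear the denominator $Y^m$. Substituting this into the displayed identity gives $(1-Y^m)^{n/m}$, as claimed.

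I do not expect a serious obstacle. The only two points deserving care are the assertion that every fiber of $\chi$ has size $n/m$ --- this is the standard fact that a homomorphism is constant-to-one onto its image, the common fiber size being $\#G/\#\chi(G) = n/m$ --- and the verification that $\prod_{\zeta^m=1}(1-\zeta Y) = 1 - Y^m$ holds as an identity in $\C[Y]$, not merely pointwise. Both are routine, and the lemma follows at once after regrouping the factors by their $\chi$-value.
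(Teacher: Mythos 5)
Your proof is correct and follows essentially the same route as the paper: the paper's one-line proof also groups the factors into $n/m$ copies of $\prod_{\zeta^m=1}(1-\zeta Y)=1-Y^m$, merely leaving implicit the fiber-counting argument you spell out. Your version is just a more careful write-up of the same idea.
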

\begin{proof}
Observe that $\prod_{i=0}^m (1-\zeta_m^iY)=1-Y^m$, and that we have $n/m$ of such products. 
\end{proof}

\begin{lemma}
We have
\begin{eqnarray*}
 \sum_{\chi \in \hat{G}: \chi^m=1} \chi(g) = \left\{ \begin{array}{cc}
                                        \#G[m]  	& g \in mG \\
					0	& g \not \in mG.
                                        \end{array}\right.
\end{eqnarray*}
\end{lemma}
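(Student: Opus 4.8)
The plan is to recognize the index set $\{\chi \in \hat{G} : \chi^m = 1\}$ as the character group of a quotient of $G$ and then to invoke the standard orthogonality relations for characters of a finite abelian group. First I would observe that the condition $\chi^m = \chi_0$ is equivalent to $\chi$ being trivial on the subgroup $mG$: indeed $\chi^m = \chi_0$ means $\chi(g)^m = \chi(mg) = 1$ for every $g \in G$, which says exactly that $\chi$ kills $mG$. Hence the characters occurring in the sum are precisely those that factor through the quotient $G/mG$, and passing to the quotient gives a bijection between $\{\chi \in \hat G : \chi^m = 1\}$ and $\widehat{G/mG}$.

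Next I would rewrite the sum as a sum over all characters of the finite abelian group $G/mG$, evaluated at the image $\bar g$ of $g$:
\[
\sum_{\chi \in \hat G:\, \chi^m=1} \chi(g) \;=\; \sum_{\bar\chi \in \widehat{G/mG}} \bar\chi(\bar g).
\]
Now I would apply the orthogonality relation for the characters of $G/mG$, which asserts that this sum equals $\#(G/mG)$ when $\bar g$ is the identity of $G/mG$ and equals $0$ otherwise. Since $\bar g = 0$ in $G/mG$ exactly when $g \in mG$, this immediately produces the case distinction in the statement.

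Finally I would identify the constant $\#(G/mG)$ with $\#G[m]$ using the multiplication-by-$m$ endomorphism of $G$, whose image is $mG$ and whose kernel is $G[m]$; this gives $\#(mG) = n/\#G[m]$ and hence $\#(G/mG) = n/\#(mG) = \#G[m]$, completing the $g \in mG$ case.

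I do not expect a serious obstacle here; the only step requiring a little care is the first one, namely the identification of $\{\chi : \chi^m = 1\}$ with $\widehat{G/mG}$, or equivalently the dual fact that $mG$ is exactly the set of elements annihilated by every character of order dividing $m$. If one prefers to avoid quotients entirely, the same argument can be phrased by applying orthogonality directly on the subgroup $H = \{\chi : \chi^m = 1\}$ of $\hat G$: viewing the restriction to $H$ of the evaluation character $\mathrm{ev}_g$ on $\hat G$, the sum is $\#H = \#G[m]$ if $\mathrm{ev}_g|_H$ is trivial and $0$ otherwise, and one checks that $\mathrm{ev}_g|_H$ is trivial if and only if $g \in mG$ by the same biduality observation.
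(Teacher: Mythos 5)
Your proposal is correct and follows essentially the same route as the paper: identifying $\{\chi \in \hat G : \chi^m = 1\}$ with the character group of $G/mG$, applying orthogonality there, and counting $\#(G/mG) = \#G[m]$ via the multiplication-by-$m$ map. The paper's version is just a terser statement of exactly this argument, so your write-up fills in the details it leaves to ``standard character theory.''
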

\begin{proof}
If $\chi^m= 1$, we know that the character factors through $G/mG$. A character on $G/mG$ induces such a character on $G$. Hence we deduce the
result from standard character theory. 
\end{proof}

Now define the following function:
\begin{eqnarray*}
 f(s)= \sum_{d|s} \sum_{\chi \in \hat{G}: \mathrm{ord}(\chi)=d} \overline{\chi}(g).
\end{eqnarray*}
From the lemma above we have $f(s)= \delta_{g \in sG} \#G[s]$.
Using the M\"obius inversion formula we find, for $s|\mathrm{exp}(G)$,
\begin{eqnarray*}
 \sum_{\chi \in \hat{G}: \mathrm{ord}(\chi)=s} \overline{\chi}(g) &=& \sum_{d|s} \mu(s/d) \delta_{g \in dG} \#G[d] \\
&=& \sum_{d|(s,e(g))} \mu(s/d) \#G[d].
\end{eqnarray*}

Hence we find
\begin{eqnarray} \label{s}
\sum_{i=0}^n N(i,g)(-Y)^i= \frac{1}{n} \sum_{s|\mathrm{exp}(G)} \sum_{d|(s,e(g))} \mu(s/d) \#G[d] (1-Y^s)^{n/s}. 
\end{eqnarray}

We single out $N(i,g)$ and obtain:
\begin{eqnarray*}
 (-1)^i N(i,g)= \frac{1}{n} \sum_{s|(\mathrm{exp}(G),i)} \sum_{d|(s,e(g))} \mu(s/d) \#G[d] (-1)^{i/s} {{n/s}\choose{i/s}}.
\end{eqnarray*}
Hence we find
\begin{eqnarray*}
 N(i,g)= \frac{1}{n} \sum_{s|(\mathrm{exp}(G),i)} (-1)^{i+\frac{i}{s}} {{n/s}\choose{i/s}} \sum_{d|(s,e(g))} \mu(s/d) \#G[d].
\end{eqnarray*}
This finishes the proof of Theorem \ref{la}. 

\section{Related problem}

We will now generalize Theorem 1.2 from \cite{WA2}. For $g \in G$ we define $N(i,g)^*=\{S \subseteq G \setminus \{0\}: \#S=i, \sum_{s \in
S}s=g\}$. We have the following result.

\begin{theorem} \label{ars}
For $g \in G$ we have
\begin{eqnarray*}
 N(i,g)^* = \frac{1}{n} \sum_{s|\mathrm{exp}(G)} (-1)^{i+\lfloor\frac{i}{s}\rfloor} {{n/s-1}\choose{\lfloor i/s \rfloor}} \sum_{d|(s,e(g))} \mu(s/d)
\#G[d].
\end{eqnarray*}
\end{theorem}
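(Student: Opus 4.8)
The plan is to mimic the generating-function argument used for Theorem \ref{la}, with the single modification that the element $0 \in G$ is excluded from the product. The starting identity becomes
\begin{eqnarray*}
\sum_{i=0}^{n-1}\sum_{g\in G}N(i,g)^*\,g\,X^i=\prod_{\sigma\in G\setminus\{0\}}(1+\sigma X)\in\C[G][X],
\end{eqnarray*}
and after applying the coefficient-extraction formula $\alpha_g=\frac1n\sum_{\chi}\overline\chi(g)\chi(\alpha)$ and setting $Y=-X$ exactly as before, one obtains
\begin{eqnarray*}
\sum_{i=0}^{n-1}N(i,g)^*(-Y)^i=\frac1n\sum_{\chi\in\hat G}\overline\chi(g)\prod_{\sigma\in G\setminus\{0\}}(1-\chi(\sigma)Y).
\end{eqnarray*}

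The key new input is the evaluation of this truncated product. First I would observe that since $\chi(0)=1$ for every $\chi\in\hat G$, the missing factor is exactly $1-Y$, so for a character $\chi$ of order $m$ the first lemma gives
\begin{eqnarray*}
\prod_{\sigma\in G\setminus\{0\}}(1-\chi(\sigma)Y)=\frac{(1-Y^m)^{n/m}}{1-Y}=(1-Y^m)^{n/m-1}\sum_{j=0}^{m-1}Y^j.
\end{eqnarray*}
At this point the dependence on $g$ enters only through the sums $\sum_{\chi:\,\mathrm{ord}(\chi)=s}\overline\chi(g)$, so the Möbius-inversion step producing $f(s)=\delta_{g\in sG}\#G[s]$ and the identity $\sum_{d|(s,e(g))}\mu(s/d)\#G[d]$ carries over verbatim. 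This yields the analogue of \eqref{s}:
\begin{eqnarray*}
\sum_{i=0}^{n-1}N(i,g)^*(-Y)^i=\frac1n\sum_{s|\mathrm{exp}(G)}\sum_{d|(s,e(g))}\mu(s/d)\#G[d]\,(1-Y^s)^{n/s-1}\sum_{j=0}^{s-1}Y^j.
\end{eqnarray*}

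The remaining step, which is where the floor function appears, is to extract the coefficient of $Y^i$. Writing $i=s\lfloor i/s\rfloor+r$ with $0\le r\le s-1$ and using that $(1-Y^s)^{n/s-1}=\sum_{k}(-1)^k\binom{n/s-1}{k}Y^{sk}$ contributes only multiples of $s$ while the geometric factor $\sum_{j=0}^{s-1}Y^j$ supplies the residue $0\le j\le s-1$, the equation $sk+j=i$ has the unique solution $k=\lfloor i/s\rfloor$, $j=r$. Hence the coefficient of $Y^i$ in the $s$-summand equals $(-1)^{\lfloor i/s\rfloor}\binom{n/s-1}{\lfloor i/s\rfloor}$, and crucially no divisibility constraint $s\mid i$ is needed; this is why the outer sum now ranges over all $s\mid\mathrm{exp}(G)$ rather than over $s\mid\gcd(\mathrm{exp}(G),i)$ as in Theorem \ref{la}. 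Equating coefficients and multiplying through by $(-1)^i$ then gives the claimed formula. The main obstacle is purely the bookkeeping in this final extraction: one must verify the uniqueness of the decomposition $i=s\lfloor i/s\rfloor+r$ so that precisely one term of the convolution survives, which is what legitimizes replacing $i/s$ by $\lfloor i/s\rfloor$ throughout.
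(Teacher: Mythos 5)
Your proof is correct and takes essentially the same approach as the paper: drop $0$ from the product, use $\chi(0)=1$ to identify the missing factor as $1-Y$ so that the truncated product becomes $(1-Y^m)^{n/m-1}(1+Y+\cdots+Y^{m-1})$, carry over the M\"obius-inversion step verbatim, and compare coefficients. The only difference is that you make explicit the final coefficient extraction via the unique decomposition $i=s\lfloor i/s\rfloor+r$, a step the paper compresses into ``comparing the coefficients gives the result.''
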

\begin{proof}
We have the following identity: 
\begin{eqnarray*}
 \sum_{i=0}^n \sum_{g \in G} N(i,g)^* g X^i= \prod_{\sigma \in G, \sigma \neq 0} (1+\sigma X) \in \C[G][X].
\end{eqnarray*}
As all characters have value one on $0$, we deduce the following from Equation \ref{s}:
\begin{eqnarray*}
\sum_{i=0}^n N(i,g)^*(-Y)^i= \frac{1}{n} \sum_{s|\mathrm{exp}(G)} \sum_{d|(s,e(g))} \mu(s/d) \#G[d] (1-Y^s)^{n/s-1}(1+Y+\ldots+Y^{s-1}). 
\end{eqnarray*}
Comparing the coefficients gives the result.
\end{proof}

\section{acknowledgements}
I would like to thank Hendrik Lenstra for helping me to find the proof of the main theorem.

\end{document}